\newtheorem{theorem}{Theorem}[section]
\newtheorem{lemma}[theorem]{Lemma}
\newtheorem{proposition}[theorem]{Proposition}
\numberwithin{equation}{section}
 \newcommand{\RR}{\mathbb{R}}
  \newcommand{\HH}{\mathcal{H}}
\begin{document}

\title{Noncentral limit theorem for the generalized Rosenblatt process}

\author[D. Bell \and D. Nualart]{Denis Bell \and David Nualart}
\address{David Nualart:
Department of Mathematics,
University of Kansas,
Lawrence, KS 66045, USA}
\email{nualart@ku.edu}
\address{Denis Bell: Department of Mathematics, University of North Florida, Jacksonville, FL 32224, USA}
\email{dbell@unf.edu}
\thanks{D. Nualart was supported by the NSF grant  DMS 1512891}

\subjclass[2010]{60H05; 60H07; 60F05; 65G18}

\keywords{Multiple stochastic integrals, Rosenblatt process, Skorohod integral, central and noncentral  limit theorems.}

\begin{abstract}

We use techniques of Malliavin calculus to study the convergence in law of a family of generalized Rosenblatt processes $Z_\gamma$ with kernels defined by parameters $\gamma$ taking values  in a tetrahedral region $\Delta$ of $\RR^q$. We prove that, as $\gamma$ converges to a face of $\Delta$, the process $Z_\gamma$ converges to a compound Gaussian distribution with random  variance given by the square of a Rosenblatt process of one lower rank. The convergence in law is shown to be stable. This work generalizes a previous result of Bai and Taqqu, who proved the result in the case $q=2$ and without stability.

\end{abstract}

\maketitle

 \begin{center}
 {\sc \small This paper is dedicated to the memory of Salah Mohammed}
 \end{center}

\section{Introduction}

Let $W=\{W_x, x\in \RR\}$ be a two-sided Brownian motion on the real line.
The {\it generalized Rosenblatt process} is defined by 
\begin{equation}  \label{y1}
Z_{\gamma} (t)=\int_{\RR^q}  f_\gamma(x_1, \dots, x_q) dW_{x_1} \cdots dW_{x_q}, \quad t\ge 0,
\end{equation}
where 
$q\ge 2$, $\gamma=(\gamma_1, \dots, \gamma_q)$ and
\[
f_{\gamma}  (x)= A _\gamma  \int_0^t  (s-x_1)_+^{\gamma_1} (s-x_2)_+^{\gamma_2}  \cdots (s-x_q)_+^{\gamma_q} ds
\]
for $x=(x_1, \dots, x_q) \in \mathbb{R}^q$. The constant $A_\gamma$ is a normalizing constant, chosen so that $E[Z_\gamma (t)^2] =t^{\bar{\gamma}+1+\frac q2}$, where $\bar{\gamma} =\gamma_1 + \cdots +\gamma_q$.
For $f_\gamma$ to be in $L^2(\RR^q)$  it is necessary that the exponent $\gamma$ live in the region
\[
\Delta=\{ \gamma:   -1 < \gamma_i < -\frac 12, 1\le i\le q,   \gamma_1 +\cdots + \gamma_q>-\frac {q+1}2\}.  
\]

For $q=1$, this process reduces to the fractional Brownian motion $B_H$ with Hurst parameter $H=\gamma_1 +\frac 32\in (\frac 12,1)$.
When $q=2$, the process has been considered by Maejima and Tudor in \cite{MaTu}, and it generalizes the classical Rossenblatt process ($q=2$, $\gamma_1=\gamma_2$)  introduced  by Taqqu in \cite{Ta}.

In a recent work by Bai and Taqqu \cite{BaiTa2}, the authors study the convergence in law of this process when $q=2$ and  the parameter $\gamma=(\gamma_1, \gamma_2)$ converges to the boundary of the region $\Delta$.  In particular, when $\gamma_1 \rightarrow-\frac 12$ and $\gamma_2$ is fixed, the limit in distribution is $\eta B_{\gamma_2+\frac 32}(t)$, where $\eta$ is a standard normal Gaussian variable independent of the  fractional Brownian motion  $B_{\gamma_2+\frac 32}$. Two different proofs are given of this result, one  based on the method of moments and a second constructive proof based on a discretization argument.    

 The goal of this paper is to derive this result as an application of a general theorem  of convergence in law  of multiple stochastic integrals   to a mixture of Gaussian distributions (see Theorem  \ref{thm2}), which is of independent interest. This theorem  is proved using a  noncentral limit theorem for Skorohod integrals derived by Nourdin and Nualart in \cite{NoNu}.
  This allows us to extend Bai and Taqqu's result in two directions: We can deal with   a general Rosenblatt  in the $q$th Wiener chaos, and we can show that the convergence is stable.
  
  On the other hand, using a version of the Fourth Moment Theorem of Nualart and Peccati \cite{NuPe}, we show (see Theorem \ref{thm5})  that when $\gamma_1+ \cdots + \gamma_q \rightarrow -\frac {q+1} 2$ and $\gamma_i >-1+\epsilon$, $1\le i\le q$, for a fixed $\epsilon>0$, then the limit is a standard Brownian motion $B(t)$. For $q=2$ this was  also proved in \cite{BaiTa2}.

\section{Preliminaries} 
\subsubsection{Multiple stochastic integrals}
We denote by $W=\{W(x), x\in \RR\}$ a two-sided Brownian motion on the real line defined on some probability space $(\Omega ,\mathcal{F},P)$. Then we can define the Wiener integral $W(h)=\int_\RR h(x) dW_x$ for any function $h$ in the Hilbert space $\HH:= L^2(\RR)$, and $\{W(h), h\in \HH\}$ is an isonormal Gaussian process. We recall that this means that this is a centered Gaussian family with covariance  given by the scalar product  in $\HH$:
\[
E[W(h) W(g)] =\langle h,g \rangle_{\HH}.
\]
For every integer $q\geq 1$, consider the tensor product $\HH^{\otimes q} = L^2(\RR^q)$ and the symmetric tensor product, denoted by $\HH^{\odot q}$, formed by the symmetric functions in $L^2(\RR^q)$. For  any symmetric function $f\in  \HH^{\odot q}$ we denote by $I_q(f)$ the multiple  Wiener-It\^o stochastic integral of $f$ with respect to $W$, that  can be defined as an iterated It\^o integral:
\[
I_q(f) = \int_{\RR^q}  f(x_1, \dots, x_q) dW_{x_1} \cdots dW_{x_q}.
\]
Then the following isometry formula holds:
\[
E[I_q(f)^2] = q! \| f\|^2_{L^2(\RR^q)}.
\]
If $f\in  L^2(\RR^q)$, we put  $I_q(f)= I_q({\tilde f})$, where $\tilde{f}$ denotes the symmetrization of $f$, that is,
\[
\tilde{f}_{\gamma}(x_1, \dots, x_q) =\frac 1 {q!} \sum_{\sigma} f(x_{\sigma_1}, \dots, x_{\sigma_q}),
\]
where $\sigma$ runs over all the permutations of $\{1,\dots, q\}$.

Let $m,q\ge1$ two integers. Given a subset $I \subset \{1,\dots, q\}$ of cardinality $r=0, \dots, q\wedge m$,  a one-to-one mapping $\psi:I\rightarrow \{1, \dots, m\}$, and two functions $f \in L^2(\mathbb{R}^q)$  and $g\in L^2(\RR^m)$, we denote by  $f  \otimes_{I,\psi} g$ the element in $L^2(\RR^{q+m-2r})$ given by
\[
f  \otimes_{I,\psi} g= \int_{\mathbb{R}^r} f(x_1, \dots, x_q) g(y_1, \dots, y_m) \prod_{j=1}^r \delta(x_i-y_{\psi(i)}) dx_1 \cdots dx_r.
\]
That is, $f  \otimes_{I,\psi} g$ is the function in  $f,g \in L^2(\mathbb{R}^{2q-2r})$ obtained by contracting the variables $x_i$ from $f$ with the variables $y_{\psi(i)}$ from $g$.
When $q=m$ and $I=\{1,\dots, q\}$, we simply write $f \otimes_\psi g$.
 Then  the following product formula for multiple stochastic integrals holds. For any $f\in L^2(\RR^q)$ and $g\in L^2(\RR^m)$,
 \begin{equation}  \label{prod}
 I_q(f)  I_m(g)=\sum_{r=0}^{q\wedge m} \sum_{I,\psi} I_{q+m-2r}(f\otimes_{I,\psi} g),
 \end{equation}
 where the sum runs over all sets  $I \subset \{1,\dots, q\}$ of cardinality $r$ and one-to-one mappings $\psi:I\rightarrow \{1, \dots, m\}$.  Notice that when $f$ and $g$ are symmetric, this reduces to the well-known formula
 \[
 I_q(f)  I_m(g)=\sum_{r=0}^{q\wedge m} {q\choose r} {m\choose r} r!  I_{q+m-2r}(f\otimes_r g),
 \]
 where  $f\otimes_r g$ is the contraction of $r$ indices of $f$ and $g$. On the other hand, for any function $f\in \HH^{\otimes q}$, which is not necessarily symmetric, we have
 \[
 E[I_q(f)^2] = \sum _\psi f \otimes_\psi f,
 \]
 where  $\psi$ runs over all bijections of $\{1,\dots, q\}$.

Let $\{F_{n}\}$ be a sequence of random variables, all defined on the probability space $(\Omega, \mathcal{F}, P)$ and let $F$ be a random variable defined on some extended probability space $(\Omega', \mathcal{F}', P')$.  We say that $F_{n}$ {\it converges  stably} to $F$, if
\[
\underset{n \rightarrow \infty}{\rm lim}E\left[Ze^{i \lambda  F_{n} } \right] = E'\left[Ze^{i\lambda, F }\right]
\]
for every $\lambda \in \RR$ and every bounded $\mathcal{F}$--measurable random variable $Z$.

 \subsubsection{Elements of Malliavin calculus}

We introduce some basic elements of the Malliavin calculus with respect
to the  two-sided Brownian motion $W$. We refer the reader to Nualart \cite%
{Nu} for a more detailed presentation of these notions. Let $\mathcal{S}$
be the set of all smooth and cylindrical random variables of
the form
\begin{equation}
F=g\left( W(h_{1}),\ldots ,W(h_{n})\right) ,  \label{v3}
\end{equation}%
where $n\geq 1$, $g:\mathbb{R}^{n}\rightarrow \mathbb{R}$ is a infinitely
differentiable function with compact support, and $h _{i}\in \HH$.
The Malliavin derivative of $F$ with respect to $X$ is the element of $%
L^{2}(\Omega ;\HH)$ defined as
\begin{equation*}
DF\;=\;\sum_{i=1}^{n}\frac{\partial g}{\partial x_{i}}\left( W(h
_{1}),\ldots ,W(h_{n})\right) h_{i}.
\end{equation*}
By iteration, one can
define the $q$th derivative $D^{q}F$ for every $q\geq 2$, which is an element of $L^{2}(\Omega ;
\HH^{\odot q})$.

For $q\geq 1$ and $p\geq 1$, ${\mathbb{D}}^{q,p}$ denotes the closure of $%
\mathcal{S}$ with respect to the norm $\Vert \cdot \Vert_{\mathbb{D}^{q,p}}$, defined by
the relation
\begin{equation*}
\Vert F\Vert _{\mathbb{D}^{q,p}}^{p}\;=\;E\left[ |F|^{p}\right] +\sum_{i=1}^{q}E\left(
\Vert D^{i}F\Vert _{\HH^{\otimes i}}^{p}\right) .
\end{equation*}
If $V$ is a real separable Hilbert space, we denote by $\mathbb{D}^{q,p}(V)$ the corresponding Sobolev space of $V$-valued random variables.

We denote by $\delta $ the adjoint of the operator $D$, also called the 
divergence operator. The operator $\delta $ is an extension of the It\^o integral. It is also called the
Skorohod integral because in the case of the Brownian motion it coincides
with the anticipating stochastic integral introduced by Skorohod in \cite{Sk}. 
A random element $u\in L^{2}(\Omega ;\HH)$ belongs to the domain of $\delta $, denoted $\mathrm{Dom}\delta $, if and
only if it satisfies
\begin{equation*}
\big|E\big(\langle DF,u\rangle _{\HH}\big)\big|\leq c_{u}\,\sqrt{E(F^2)}
\end{equation*}%
for any $F\in \mathbb{D}^{1,2}$, where $c_{u}$ is a constant depending only
on $u$. If $u\in \mathrm{Dom}\delta $, then the random variable $\delta (u)$
is defined by the duality relationship 
\begin{equation}
E(F\delta (u))=E\big(\langle DF,u\rangle _{\HH}\big),  \label{ipp}
\end{equation}%
which holds for every $F\in {\mathbb{D}}^{1,2}$.  The operators $D$ and $\delta$ satisfy the following commutation relation:
\begin{equation} \label{comm}
D(\delta (u))= u+ \delta(Du),
\end{equation}
for any $u\in \mathbb{D}^{2,2}(\HH)$.

\section{Noncentral limit theorems for multiple stochastic integrals}
  
The following result has been proved by Nourdin and Nualart in \cite{NoNu}.

\begin{theorem} \label{thm1}
Consider a sequence of Skorohod integrals of the form $F_n=\delta(u_n)$, where $u_n\in \mathbb{D}^{2,2}(\HH)$. Suppose that the sequence $\{F_n, n\ge 1\}$ is bounded in $L^1(\Omega)$ and the following conditions hold:
\begin{itemize}
\item[(i)] $\langle u_n, h \rangle_{\mathcal{H}} $  converges to zero in $L^1(\Omega)$ for  all elements $h\in \mathcal{H}_0$, where $\mathcal{H}_0$ is a dense subset of $\mathcal{H}$.
\item[(ii)] $\langle u_n, DF_n \rangle_{\mathcal{H}}$ converges in $L^1(\Omega)$ to a nonnegative random variable $S^2$.
\end{itemize}
Then $F_n$ converges stably to a random variable with conditional Gaussian law $N(0,S^2)$ given $W$.
\end{theorem}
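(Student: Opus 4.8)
The plan is to prove the statement by the standard route for stable convergence toward a conditionally Gaussian limit: show that $E[Z e^{i\lambda F_n}]$ converges to $E[Z\exp(-\tfrac12\lambda^2 S^2)]$ for every $\lambda\in\RR$ and every $Z$ in a determining class. Since $|e^{i\lambda F_n}|\le 1$, the maps $Z\mapsto E[Z e^{i\lambda F_n}]$ are uniformly bounded functionals on $L^1(\Omega,\sigma(W))$, so it suffices to establish the convergence for smooth cylindrical $Z=g(W(h_1),\dots,W(h_m))$ with $h_i\in\HH_0$; a weak-$*$/density argument then extends it to all bounded $\mathcal{F}$-measurable $Z$, which is precisely stable convergence to $N(0,S^2)$ given $W$. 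Note that $S^2$ is $\sigma(W)$-measurable, being an $L^1$-limit of the $\sigma(W)$-measurable variables $\langle u_n,DF_n\rangle_\HH$.

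First I would fix such a $Z$ and set $\phi_n(\lambda)=E[Z e^{i\lambda F_n}]$. Because $u_n\in\mathbb{D}^{2,2}(\HH)$, the commutation relation \eqref{comm} gives $F_n=\delta(u_n)\in\mathbb{D}^{1,2}$ with $DF_n=u_n+\delta(Du_n)$, so $\langle u_n,DF_n\rangle_\HH$ is well defined and $\phi_n$ is $C^1$. Differentiating and applying the duality relationship \eqref{ipp} to $\Phi=Z e^{i\lambda F_n}\in\mathbb{D}^{1,2}$, with $D\Phi=i\lambda e^{i\lambda F_n}Z\,DF_n+e^{i\lambda F_n}\,DZ$, yields
\[
\phi_n'(\lambda)=-\lambda E\big[Z e^{i\lambda F_n}\langle u_n,DF_n\rangle_\HH\big]+iE\big[e^{i\lambda F_n}\langle u_n,DZ\rangle_\HH\big].
\]
Since $DZ=\sum_i\partial_i g(\cdots)h_i$ with $h_i\in\HH_0$ and the $\partial_i g$ bounded, condition (i) kills the second term in the limit. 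For the first, I would replace $\langle u_n,DF_n\rangle_\HH$ by $S^2$ at the cost of $|\lambda|\,\|Z\|_\infty E[|\langle u_n,DF_n\rangle_\HH-S^2|]\to0$ by condition (ii), obtaining the approximate relation
\[
\phi_n'(\lambda)=-\lambda E\big[Z S^2 e^{i\lambda F_n}\big]+o(1),
\]
with the $o(1)$ uniform on compact $\lambda$-intervals.

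The main obstacle is that the right-hand side features the \emph{new} test variable $ZS^2$, so naive integration in $\lambda$ produces an unclosed hierarchy, and $e^{i\lambda F_n}$ itself does not converge, so one cannot pass to the limit termwise. To resolve this I would not identify the limit directly, but first extract it by compactness. The $L^1$-boundedness of $\{F_n\}$ makes their laws tight, so along a subsequence the pairs $(W,F_n)$ converge stably to some $F_\infty$ defined on an extension $(\Omega',\mathcal{F}',P')$ with $W$ preserved (the standard existence of stable subsequential limits for tight sequences). Along this subsequence $\phi_n(\lambda)\to E'[Z e^{i\lambda F_\infty}]=:\phi_\infty(\lambda)$. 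The crucial point is that one now \emph{can} pass to the limit in the $S^2$-weighted term: writing $S^2=S^2_M+(S^2-S^2_M)$ with $S^2_M=S^2\wedge M$, stable convergence applies to the bounded $\mathcal{F}$-measurable weight $ZS^2_M$, while the tail is controlled uniformly in $n$ by $E[(S^2-M)_+]\to0$ as $M\to\infty$ since $S^2\in L^1$. An $\varepsilon/3$ argument then gives $E[ZS^2 e^{i\lambda F_n}]\to E'[ZS^2 e^{i\lambda F_\infty}]$, and integrating the approximate relation and using dominated convergence ($F_\infty\in L^1$ by the uniform $L^1$-bound) produces the exact identity
\[
\frac{d}{d\lambda}E'\big[Z e^{i\lambda F_\infty}\big]=-\lambda E'\big[Z S^2 e^{i\lambda F_\infty}\big].
\]

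Finally I would solve this identity. Conditioning on $W$ and writing $\Theta(\lambda)=E'[e^{i\lambda F_\infty}\mid\mathcal{F}]$, the displayed relation reads $E[Z(\partial_\lambda\Theta+\lambda S^2\Theta)]=0$ for all $Z$ in a class dense in $L^1(\sigma(W))$, whence $\partial_\lambda\Theta(\lambda)=-\lambda S^2\Theta(\lambda)$ with $\Theta(0)=1$. Solving pathwise in $\omega$ gives $\Theta(\lambda)=\exp(-\tfrac12\lambda^2 S^2)$, i.e.\ $F_\infty$ is conditionally $N(0,S^2)$ given $W$. Since every subsequential stable limit is then the same, the full sequence $F_n$ converges stably to $N(0,S^2)$ given $W$, and the density argument of the first paragraph upgrades this to all bounded $\mathcal{F}$-measurable test variables. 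I expect the genuinely delicate steps to be the extraction of a subsequential stable limit from the mere $L^1$-bound and the truncation argument licensing the passage of $E[ZS^2 e^{i\lambda F_n}]$ to the limit, since $S^2$ is only integrable; everything else is bookkeeping with \eqref{ipp} and conditions (i)–(ii).
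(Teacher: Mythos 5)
Your argument is correct and is essentially the proof of this result in the literature: the paper itself states Theorem \ref{thm1} without proof, citing Nourdin and Nualart \cite{NoNu}, and their argument is exactly your route --- differentiate $\lambda\mapsto E[Ze^{i\lambda F_n}]$, use the duality \eqref{ipp} to produce $\langle u_n,DF_n\rangle_{\HH}$ and $\langle u_n,DZ\rangle_{\HH}$, kill the latter by (i) and replace the former by $S^2$ via (ii), extract a stable subsequential limit from tightness, and identify it through the conditional ODE $\partial_\lambda\Theta=-\lambda S^2\Theta$. The two points you flag as delicate (existence of a stably convergent subsequence for a tight sequence over a countably generated $\sigma$-field, and the truncation of the merely integrable weight $S^2$) are indeed the only nonroutine steps, and both are handled correctly.
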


\medskip
On the other hand, from  Proposition 3.1 of the paper by Nourdin, Nualart and Peccati \cite{NoNuPe}, it follows that for any test function $\varphi \in \mathcal{C}^3$, we have
\begin{equation} \label{rate}
|E[\varphi(F_n)]  -E[ \varphi(S\eta)]|  \le  \frac 12 \|\varphi'' \|_\infty E[ | \langle u_n, DF_n \rangle_{\mathcal{H}} - S^2 |]
+  \frac 13\| \varphi '''\|_\infty E[| \langle u_n, DS^2 \rangle_{\mathcal{H}} |],
\end{equation}
assuming $S^2 \in \mathbb{D}^{1,2}$, and where $\eta$ is a $N(0,1)$-random variable independent of the process $W$.  This provides a rate of convergence in the previous theorem. Moreover, the in order to show the convergence  in law  $F_n \Rightarrow S\eta$, it suffices to check the following two conditions:
\begin{itemize}
\item[(i)] $\langle u_n, DF_n \rangle_{\mathcal{H}}  \rightarrow S^2 $  in $L^1(\Omega)$ as $n$ tends to infinity, and
\item[(ii)]  $\langle u_n, DS^2 \rangle_{\mathcal{H}} \rightarrow 0$  in $L^1(\Omega)$ as $n$ tends to infinity.
\end{itemize}

\medskip
Applying Theorem  \ref{thm1},  we derive the following noncentral limit theorem for a sequence of multiple stochastic integrals of order $q$.

\begin{theorem}  \label{thm2}
Fix $q\ge 2$. Let $F_n   $ be given by
\[
F_n= \int_{\mathbb{R}^q} f_n (x_1,x_2, \dots, x_q) dW_{x_1} dW_{x_2} \cdots dW_{x_q},
\]
where  $f_n \in L^2(\mathbb{R}^q)$
Assume that:
\begin{itemize}
\item[(i)]   For all elements $h\in \mathcal{H}_0$, where $\mathcal{H}_0$ is a dense subset of $\mathcal{H}$, we have
\[
\int_{\RR} h(\xi) f_n (\xi, \cdot) d\xi
\]
converges to zero in $\HH^{\otimes(q-1)}$.
 \item[(ii)]   For any subset $I\subset \{1,\dots, q\}$ of cardinality $r=1, \dots , q$   and any one-to-one mapping $\psi:I\rightarrow \{1,\dots, q\}$ such that
 $1\in I$ and $\psi(1) \not =1$, 
 \[
f_n \otimes_{I,\psi} f_n
 \]
converges to zero in $\mathcal{H}^{\otimes(2q-2r)}$.
 
\item[(iii)]  There exists  an  element $g\in  L^2(\mathbb{R}^{q-1})$ with variables $g(x_{1}, \dots, x_q)$, such that for any subset $I \subset \{2,\dots, q\}$ of cardinality $r=0, \dots , q-1$  and any one-to-one mapping $\psi:I\rightarrow \{2,\dots, q\}$  
\[
\lim_{n \rightarrow \infty}  \int_{\mathbb{R}} [f_n(\xi, \cdot) \otimes_{I,\psi} f_n(\xi, \cdot)] d\xi = g \otimes _{I , \psi} g,
\]
where the convergence holds in  $\mathcal{H}^{\otimes(2q-2r-2)}$.
\end{itemize}
Then $F_n$ converges stably to a random variable with  conditional Gaussian law $N(0,S^2)$ given $W$, where
\[
S^2 =  \left( I_{q-1}(g) \right)^2.
\]
\end{theorem}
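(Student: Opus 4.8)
The plan is to realize $F_n$ as a Skorohod integral and then verify the hypotheses of Theorem~\ref{thm1}. Writing $u_n(\xi) = I_{q-1}(f_n(\xi,\cdot))$ for the $\HH$-valued process obtained by freezing the first variable of $f_n$, the identity $\delta(u_n) = I_q(f_n)$ gives $F_n = \delta(u_n)$; since each $u_n(\xi)$ lives in the fixed $(q-1)$th chaos, the regularity $u_n \in \mathbb{D}^{2,2}(\HH)$ and the $L^2$- (hence $L^1$-) boundedness of $F_n$ are routine, the latter following from assumption (iii) applied with the full contraction ($r=q-1$, $\psi=\mathrm{id}$), which forces $\|f_n\|^2_{L^2(\RR^q)} \to \|g\|^2_{L^2(\RR^{q-1})}$ and so bounds $E[F_n^2] = q!\,\|\tilde f_n\|^2 \le q!\,\|f_n\|^2$.

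For condition (i) of Theorem~\ref{thm1}, I would compute, for $h \in \HH_0$,
\[
\langle u_n, h\rangle_\HH = I_{q-1}\Big(\int_\RR h(\xi)\, f_n(\xi, \cdot)\, d\xi\Big),
\]
so that the isometry bounds its $L^2$-norm by a constant times $\big\|\int_\RR h(\xi)\, f_n(\xi, \cdot)\, d\xi\big\|_{\HH^{\otimes(q-1)}}$, which tends to $0$ by assumption (i) of Theorem~\ref{thm2}; thus $\langle u_n, h\rangle_\HH \to 0$ in $L^2$ and a fortiori in $L^1$.

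The heart of the argument is condition (ii) of Theorem~\ref{thm1}, namely $\langle u_n, DF_n\rangle_\HH \to S^2$ in $L^1$. Using that the Malliavin derivative distributes over the $q$ variables, $D_\xi F_n = \sum_{i=1}^q I_{q-1}(f_n^{(i)}(\xi, \cdot))$, where $f_n^{(i)}(\xi, \cdot)$ freezes the $i$th variable at $\xi$, I would write
\[
\langle u_n, DF_n\rangle_\HH = \sum_{i=1}^q \int_\RR I_{q-1}(f_n(\xi, \cdot))\, I_{q-1}(f_n^{(i)}(\xi, \cdot))\, d\xi.
\]
The diagonal term $i=1$ equals $\|u_n\|^2_\HH$; expanding the square by the product formula \eqref{prod} and integrating in $\xi$ produces exactly the kernels $\int_\RR f_n(\xi, \cdot) \otimes_{I,\psi} f_n(\xi, \cdot)\, d\xi$ with $I \subset \{2, \dots, q\}$, which converge to $g \otimes_{I,\psi} g$ by assumption (iii); reassembling via the product formula for $I_{q-1}(g)^2$ identifies the limit as $S^2$. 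For each off-diagonal term $i \ge 2$, the product formula together with the integration in $\xi$ merges variable $1$ of the first copy of $f_n$ with variable $i$ of the second copy, so every resulting summand is a multiple integral of a contraction $f_n \otimes_{I,\psi} f_n$ with $1 \in I$ and $\psi(1) = i \ne 1$; these vanish in $L^2$ by assumption (ii). Hence $\langle u_n, DF_n\rangle_\HH \to S^2$ in $L^1$.

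With all hypotheses of Theorem~\ref{thm1} in force, I would conclude that $F_n$ converges stably to a variable with conditional law $N(0, S^2)$ given $W$, where $S^2 = I_{q-1}(g)^2$. The principal obstacle is the bookkeeping in the off-diagonal terms: one must verify carefully that the $\xi$-integration, combined with each contraction pattern from the product formula, always yields a global contraction index set $I = \{1\} \cup J$ whose map sends $1 \mapsto i \ne 1$, so that precisely the family controlled by assumption (ii) appears and nothing escapes into the diagonal family of assumption (iii). Keeping the symmetrizations and the passage from $\HH^{\otimes k}$-convergence of kernels to $L^1$-convergence of the integrals straight is the remaining technical care required.
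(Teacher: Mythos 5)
Your proposal is correct and follows essentially the same route as the paper: realize $F_n=\delta(u_n)$ with $u_n(\xi)=I_{q-1}(f_n(\xi,\cdot))$, verify condition (i) of Theorem~\ref{thm1} via the isometry, decompose $D_\xi F_n$ into the diagonal term $u_n(\xi)$ plus the $q-1$ terms obtained by freezing the $i$th variable for $i\ge 2$, kill the off-diagonal inner products using hypothesis (ii), and identify the limit of $\|u_n\|_\HH^2$ as $(I_{q-1}(g))^2$ via the product formula and hypothesis (iii). Your explicit check of the $L^1$-boundedness of $F_n$ (via the full contraction in hypothesis (iii)) is a small point the paper leaves implicit, but otherwise the arguments coincide.
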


\begin{proof}
We can write $F_n=\delta(u_n)$ where  $u_n(\xi)=I_{q-1}(f_n(\xi, \cdot))$.   Then, we claim that $F_n$ and $u_n$ satisfy the conditions of Theorem  \ref{thm1}.
Notice tat $u_n\in \mathbb{D}^{2,2}(\HH)$ because $u_n$ is a multiple stochastic integral. 
To show condition (i) of Theorem \ref{thm1}, fix $h \in \HH$. Then,
\begin{eqnarray*}
E[ \langle u_n, h \rangle_\HH^2]&=& E\left[ \left| I_{q-1} \left( \int_{\RR} h(\xi) f_n(\xi, \cdot) d\xi \right) \right|^2 \right]\\
&\le & q!  \left \| \int_{\RR} h(\xi) f_n(\xi, \cdot) d\xi \right\|^2_{\HH^{\otimes (q-1)}},
\end{eqnarray*}
which converges to zero by condition (i).

It remains to check condition (ii) of Theorem \ref{thm1}.
Let us  first  compute the inner product   $ \langle u_n, DF_n \rangle_{\mathcal{H}}$.   Recall that $F_n=\delta (u_n)$, where $u_n(\xi)= I_{q-1} (f_n(\xi, \cdot))$.  Using the commutation relation (\ref{comm}), we can write
\begin{eqnarray*}
D_\xi F_n &=& u_n(\xi)+ \delta(D_\xi u_n)\\
&=& u_n(\xi)+  \sum_{i=2}^q \int_{\mathbb{R}^{q-1}}  f_n(x_1,  x_2 ,\dots, x_{i-1} ,\xi, x_{i+1}, \cdots, x_{q-1}) dW_{x_1} \cdots dW_{x_{q-1}}\\
&=:& u_n(\xi) + \sum_{i=2}^q G_{n,i}(\xi).
\end{eqnarray*}
We claim that  $\langle u_n, G_{n,i} \rangle_{\mathcal{H}}$ converges to zero in $L^2(\Omega)$. Indeed,
\begin{eqnarray*}
\langle u_n, G_{n,i} \rangle_{\mathcal{H}}&=& \int_{\mathbb{R}}    \left(
 \int_{\mathbb{R}^{q-1}} f_n (\xi,x_1, \dots, x_{q-1}) dW_{x_1} \cdots dW_{x_{q-1}}  \right) \\
 &&\times \left(  \int_{\mathbb{R}^{q-1}}  f_n(x_1,  x_2 ,\dots, x_{i-1} ,\xi, x_{i+1}, \cdots, x_{q-1}) dW_{x_1} \cdots dW_{x_{q-1}}\right) d\xi
\end{eqnarray*}
As a consequence, using the product formula for multiple stochastic integrals (see (\ref{prod})), we can write
\[
\langle u_n, G_{n,i} \rangle_{\mathcal{H}}=  
\sum_{r=1}^{q}  \sum_{I, \psi}  
 I_{2q-2r-2}   (f_n\otimes_{I,\psi} f_n),
\]
where the sum is over all sets $I \subset \{1,\dots, q\}$ of cardinality $r$ and one-to-one mappings $\psi: I\rightarrow \{1,\dots ,q\}$ such that  $i\in I$ and $\psi(1)=i$. Because $i\not =1$, by condition (i) this term
 converges to zero in $L^2(\Omega)$.
Finally,  taking into account that
\[
\langle u_n, DF_n \rangle_{\mathcal{H}}=  \|u_n\|^2_{\mathcal{H}} +\sum_{i=2}^q\langle u_n, G_{n,i} \rangle_{\mathcal{H}},
\]
it suffices to consider the convergence of  $ \|u_n\|^2_{\mathcal{H}} $.  For this term, we have
\begin{eqnarray*}
 \|u_n\|^2_{\mathcal{H}} &=& \int_{\mathbb{R}} I_{q-1} (f_n(\xi, \cdot))^2 d\xi  \\
&=& \sum_{r=0}^{q-1}  \sum_{I, \psi} I_{2q-2r-2}\left( \int_{\RR} (f_n(\xi, \cdot) \otimes_{I,\psi} f_n(\xi,\cdot)) d\xi\right),
\end{eqnarray*}
where the sum is over all sets $I \subset \{2,\dots, q\}$ of cardinality $r$ and one-to-one mappings $\psi: I\rightarrow \{2, \dots,q\}$. 
By our hypothesis  (ii), this sum converges in $L^2(\Omega)$ to
\[
  \sum_{r=0}^{q-1}     \sum_{I,\psi} I_{2q-2r-2}  ( g \otimes_{ I, \psi} g)  =  (I_{q-1}(g))^2,
 \] 
where the sum runs over all sets  $I \subset \{2,\dots, q\}$ of cardinality $r=0, \dots , q-1$  and any one-to-one mappings $\psi:I\rightarrow \{2,\dots, q\}$.  
  This completes the proof.
 \end{proof}
 
It will have been noted that the proof of Theorem 3.2 depends crucially upon expressing $F_n$ as the Skorohod integral of a multiple Wiener integral of rank $q-1$, i.e.  choosing a kernel $f_n$ such that $u_n(\xi)= I_{q-1}(f_n(\xi))$. Obviously, the choice of $f_n$ is not unique, e.g. one could equally well choose
$ f_n(\xi) = f_n(x_1,x_2,\dots,, x_{i-1},\xi, x_{i+1},\dots,x_{q-1})$, for any $2\le i\le q$. However, any such choice will lead to the term     
$$
\Big |\Big |\int_{\RR^{q-1}} f_n(x_1,x_2,\dots,, x_{i-1},\xi, x_{i+1},\dots,x_{q-1}) dW_{x_1} \cdots dW_{x_{q-1}}\Big |\Big |_\mathcal{H}^2.
$$
in the computation of $\langle u_n, DF_n \rangle_{\mathcal{H}}$. This term evidently does not converge in $L^2(\Omega)$ since it can be shown that  its $L^2$ norm converges to a non-zero limit, while the integrand \break $f_n(x_1,x_2,\dots,, x_{i-1},\xi, x_{i+1},\dots,x_{q-1})$ converges {\it pointwise to zero} outside of  the diagonal  in $\RR^{q-1}$. Thus the choice $i=1$ is the only one that will work in the argument.

\

 The case where the limit is Gaussian is not included in Theorem \ref{thm2}. We state this convergence in the next theorem, whose proof would be similar to that of Theorem \ref{thm2}. Notice that  Theorem \ref{thm4} below is just the Fourth Moment Theorem proved by Nualart and Peccati in \cite{NuPe} (see the reference \cite{np-book} for extensions and applications of this result).  In the version below of the Fourth Moment Theorem we do not require the kernels to be symmetric and we add condition (i) which ensures the stability of the convergence.
  
 \begin{theorem}  \label{thm4}
 Fix $q\ge 2$. Let $F_n $ be given by
\[
F_n= \int_{\mathbb{R}^q} f_n (x_1,x_2, \dots, x_q) dW_{x_1} dW_{x_2} \cdots dW_{x_q},
\]
where $f_n \in L^2(\mathbb{R}^q)$.
 Suppose that condition (i) in Theorem \ref{thm1} holds, and moreover,
 \begin{itemize}
 \item[(ii)] For any subset $I\subset \{1, \dots, q\}$ of cardinality $r=1,\dots, q-1$ and any one-to-one mapping $\psi: I\rightarrow \{1,\dots, q\}$, we have
 \[
 f_n \otimes_{I,\psi}  f_n \rightarrow 0
\]
in  $\HH^{\otimes (2q-2r)}$.
\item[(iii)] $\lim_{n\rightarrow \infty} E[F_n^2]=  \sigma^2  $.
\end{itemize}
Then, $F_n$ converges stably to a random variable with  Gaussian law $N(0,\sigma^2)$, independent of $W$.
  \end{theorem}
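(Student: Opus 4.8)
The plan is to follow the proof of Theorem~\ref{thm2} as far as the Skorohod representation goes, and then to exploit the stronger hypothesis (ii) to force the limiting conditional variance to be the \emph{deterministic} constant $\sigma^2$ instead of a genuine random variable. As in Theorem~\ref{thm2} I would set $F_n=\delta(u_n)$ with $u_n(\xi)=I_{q-1}(f_n(\xi,\cdot))$; since $u_n$ is a multiple integral it lies in $\mathbb{D}^{2,2}(\HH)$, and hypothesis (iii) gives $\sup_n E[F_n^2]<\infty$, so $\{F_n\}$ is bounded in $L^1(\Omega)$. Condition (i) of Theorem~\ref{thm1} is assumed outright. Hence the whole argument reduces to verifying condition (ii) of Theorem~\ref{thm1}, namely that $\langle u_n,DF_n\rangle_{\mathcal H}\to\sigma^2$ in $L^1(\Omega)$, which will identify the limiting variance as $S^2=\sigma^2$.

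For the main term I would reuse the decomposition from Theorem~\ref{thm2}: the commutation relation (\ref{comm}) gives $D_\xi F_n=u_n(\xi)+\sum_{i=2}^q G_{n,i}(\xi)$, so that
\[
\langle u_n,DF_n\rangle_{\mathcal H}=\|u_n\|_{\mathcal H}^2+\sum_{i=2}^q\langle u_n,G_{n,i}\rangle_{\mathcal H}.
\]
Expanding every summand with the product formula (\ref{prod}) represents $\langle u_n,DF_n\rangle_{\mathcal H}$ as a finite linear combination of multiple integrals $I_{2q-2r}(f_n\otimes_{I,\psi}f_n)$ indexed by subsets $I\subset\{1,\dots,q\}$ of cardinality $r\in\{1,\dots,q\}$ and injections $\psi:I\to\{1,\dots,q\}$, the norm term contributing the contractions with $\psi(1)=1$ and each cross term $\langle u_n,G_{n,i}\rangle_{\mathcal H}$ those with $\psi(1)=i\neq1$. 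The decisive point is that hypothesis (ii) of Theorem~\ref{thm4} is posited for \emph{all} such $I$ and $\psi$ with $1\le r\le q-1$, not merely for the ``twisted'' ones with $\psi(1)\neq1$ as in Theorem~\ref{thm2}. Consequently every contraction of positive rank ($r\le q-1$) tends to zero in $\HH^{\otimes(2q-2r)}$, and by the isometry the corresponding multiple integrals of order at least $1$ tend to zero in $L^2(\Omega)$.

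It remains to control the order-zero (constant) component, which is simply $E[\langle u_n,DF_n\rangle_{\mathcal H}]$. Here I would invoke the duality relation (\ref{ipp}) with $F=F_n$ and $u=u_n$: because $F_n=\delta(u_n)$,
\[
E\big[\langle u_n,DF_n\rangle_{\mathcal H}\big]=E\big[F_n\,\delta(u_n)\big]=E[F_n^2]\longrightarrow\sigma^2
\]
by hypothesis (iii). Since all positive-order chaoses vanish in $L^2(\Omega)$ while the mean converges to $\sigma^2$, we obtain $\langle u_n,DF_n\rangle_{\mathcal H}\to\sigma^2$ in $L^2(\Omega)$, hence in $L^1(\Omega)$. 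Theorem~\ref{thm1} then gives stable convergence of $F_n$ to a variable whose conditional law given $W$ is $N(0,\sigma^2)$; as $\sigma^2$ is deterministic this conditional law is unconditional, which is exactly the asserted convergence to an $N(0,\sigma^2)$ variable independent of $W$.

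I expect the only real obstacle to be the bookkeeping in the product-formula expansion, in particular recognizing that the contraction $\int_{\RR}f_n(\xi,\cdot)\otimes_{I,\psi}f_n(\xi,\cdot)\,d\xi$ appearing inside $\|u_n\|_{\mathcal H}^2$ coincides with $f_n\otimes_{I',\psi'}f_n$ for $I'=I\cup\{1\}$ and $\psi'(1)=1$, so that it too is annihilated by hypothesis (ii) and no positive-order chaos is overlooked. The conceptual content, by contrast with Theorem~\ref{thm2}, is that here the untwisted contractions (those with $\psi(1)=1$) are also assumed to vanish; this is precisely what collapses the random variance $(I_{q-1}(g))^2$ of Theorem~\ref{thm2} into the constant $\sigma^2$, while condition (i) of Theorem~\ref{thm1} is what guarantees that the convergence is stable rather than merely in law.
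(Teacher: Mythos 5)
Your proof is correct and follows exactly the route the paper intends: the paper does not actually write out a proof of this theorem, remarking only that it ``would be similar to that of Theorem \ref{thm2}'', and your argument is a faithful execution of that plan via the representation $F_n=\delta(u_n)$ with $u_n(\xi)=I_{q-1}(f_n(\xi,\cdot))$, the decomposition $\langle u_n,DF_n\rangle_{\mathcal H}=\|u_n\|^2_{\mathcal H}+\sum_{i=2}^q\langle u_n,G_{n,i}\rangle_{\mathcal H}$, and an appeal to Theorem \ref{thm1}. The one point where you had to supply something beyond a verbatim transcription of the earlier proof --- identifying the zeroth-chaos component of $\langle u_n,DF_n\rangle_{\mathcal H}$ with $E[F_n^2]$ via the duality relation (\ref{ipp}), which is needed because hypothesis (ii) only kills contractions of rank $r\le q-1$ and says nothing about the full $r=q$ contractions that produce the constant terms --- is handled correctly and is precisely the right way to bring hypothesis (iii) into play.
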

 
 \section{Generalized Rosenblatt process}

We are interested in the asymptotic behavior of  the generalized Rosenblatt process $Z_\gamma(t)$ defined in (\ref{y1}), when the parameter $\gamma$ converges to the boundary of the region $\Delta$. Consider first the case when one of the parameters (for simplicity we choose the first one) converges to $-\frac 12$.
 
 We will make use of the following  technical lemmas. The first lemma was proved by Bai and Taqqu in \cite[Lemma 3.2]{BaiTa1}.

\begin{lemma}  \label{lem1}
Suppose $-1<\gamma_1,\gamma_2<-1/2$ and $s_1, s_2>0$. Then 
\begin{eqnarray*}
\int_{-\infty}^\infty (s_1-x)_+^{\gamma_1}(s_2-x)^{\gamma_2}_+dx
&=& (s_2-s_1)_+^{1+\gamma_1+\gamma_2}B(1+\gamma_1, -1-\gamma_1-\gamma_2) \\
&&+
(s_1-s_2)_+^{1+\gamma_1+\gamma_2}B(1+\gamma_2, -1-\gamma_1-\gamma_2).
\end{eqnarray*}
\end{lemma}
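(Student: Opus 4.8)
The plan is to reduce the integral to a standard Beta-function integral by exploiting the support of the integrand and a single scaling substitution. First I would observe that $(s_1-x)_+^{\gamma_1}$ vanishes for $x\ge s_1$ and $(s_2-x)_+^{\gamma_2}$ vanishes for $x\ge s_2$, so the integrand is supported on the half-line $x<\min(s_1,s_2)$, where both positive parts may be dropped. Since the two factors play symmetric roles, it suffices to treat the case $s_1<s_2$ and then deduce the case $s_1>s_2$ by interchanging $(s_1,\gamma_1)$ with $(s_2,\gamma_2)$. The diagonal $s_1=s_2$ is a null set for the application (and the integral in fact diverges there, consistently with both positive parts appearing as $0$ raised to the negative power $1+\gamma_1+\gamma_2$).

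Assume $s_1<s_2$. The integral becomes $\int_{-\infty}^{s_1}(s_1-x)^{\gamma_1}(s_2-x)^{\gamma_2}\,dx$. I would substitute $u=s_1-x$ to move the singularity of the first factor to the origin, obtaining
\[
\int_0^\infty u^{\gamma_1}\,(u+a)^{\gamma_2}\,du,\qquad a:=s_2-s_1>0.
\]
The scaling substitution $u=aw$ then turns this into $a^{1+\gamma_1+\gamma_2}\int_0^\infty w^{\gamma_1}(1+w)^{\gamma_2}\,dw$, and the remaining integral is the classical representation $\int_0^\infty w^{p-1}(1+w)^{-p-q}\,dw=B(p,q)$ with $p=1+\gamma_1$ and $p+q=-\gamma_2$, i.e. $q=-1-\gamma_1-\gamma_2$. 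This yields $(s_2-s_1)^{1+\gamma_1+\gamma_2}B(1+\gamma_1,-1-\gamma_1-\gamma_2)$, which is exactly the first term of the asserted formula; the second term is absent since $(s_1-s_2)_+=0$ in this case.

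The only point requiring care is the convergence of the Beta integral, which is precisely where the hypotheses enter: integrability at $w=0$ needs $p=1+\gamma_1>0$, i.e. $\gamma_1>-1$, and integrability at $w=\infty$ needs $q=-1-\gamma_1-\gamma_2>0$, i.e. $\gamma_1+\gamma_2<-1$, which holds because each exponent lies below $-\tfrac12$. No genuine obstacle arises beyond this bookkeeping, so I do not expect a hard step; the symmetric case $s_1>s_2$ follows verbatim with the indices swapped, producing the companion term $(s_1-s_2)^{1+\gamma_1+\gamma_2}B(1+\gamma_2,-1-\gamma_1-\gamma_2)$.
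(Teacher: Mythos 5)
Your proof is correct and complete: the reduction to the half-line $x<\min(s_1,s_2)$, the translation $u=s_1-x$, the scaling $u=aw$, and the identification with the standard representation $B(p,q)=\int_0^\infty w^{p-1}(1+w)^{-p-q}\,dw$ all check out, and the hypotheses $-1<\gamma_i<-\tfrac12$ are used exactly where needed for convergence at $w=0$ and $w=\infty$. Note that the paper itself gives no proof of this lemma --- it is quoted from Bai and Taqqu \cite[Lemma 3.2]{BaiTa1} --- so there is no internal argument to compare against; your derivation is the standard one and can stand as a self-contained proof.
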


The second lemma concerns the asymptotic behavior of the Beta function (see \cite[Lemma 3.8]{BaiTa2}).

\begin{lemma} \label{lem2}
As $\alpha \rightarrow 0$, we have
\[
\alpha B(\alpha, \beta) \rightarrow 1,
\]
uniformly in $\beta \in [b_0, b_1]$, where $0<b_0<b_1 <\infty$.
\end{lemma}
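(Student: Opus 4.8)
The plan is to reduce everything to the standard representation of the Beta function in terms of the Gamma function and then exploit continuity. Recall that $B(\alpha,\beta) = \Gamma(\alpha)\Gamma(\beta)/\Gamma(\alpha+\beta)$ and that the Gamma function satisfies the functional equation $\alpha\Gamma(\alpha) = \Gamma(1+\alpha)$. Combining these, I would first rewrite
\[
\alpha B(\alpha,\beta) = \frac{\alpha\Gamma(\alpha)\Gamma(\beta)}{\Gamma(\alpha+\beta)} = \frac{\Gamma(1+\alpha)\,\Gamma(\beta)}{\Gamma(\alpha+\beta)}.
\]
The virtue of this form is that the singular factor $\Gamma(\alpha)$, which blows up as $\alpha\to 0$, has been absorbed into $\Gamma(1+\alpha)$, which is continuous and satisfies $\Gamma(1+\alpha)\to\Gamma(1)=1$. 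For fixed $\beta$ the pointwise limit is then immediate, since the numerator tends to $\Gamma(1)\Gamma(\beta)=\Gamma(\beta)$ and the denominator to $\Gamma(\beta)$.

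To upgrade this to uniformity on $[b_0,b_1]$, I would fix a small $\alpha_0>0$ and restrict to $|\alpha|\le\alpha_0$, so that $\alpha+\beta$ ranges over the compact interval $[b_0-\alpha_0,\,b_1+\alpha_0]$, which can be taken inside $(0,\infty)$. On this compact set $\Gamma$ is continuous, hence uniformly continuous, and bounded away from zero; in particular there are constants $0<m\le M<\infty$ with $m\le\Gamma(\xi)\le M$ for all $\xi$ in that interval.

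I would then estimate
\[
\left|\alpha B(\alpha,\beta)-1\right| = \left|\frac{\Gamma(1+\alpha)\Gamma(\beta)-\Gamma(\alpha+\beta)}{\Gamma(\alpha+\beta)}\right|,
\]
bounding the denominator below by $m$ and controlling the numerator by writing $\Gamma(1+\alpha)\Gamma(\beta)-\Gamma(\alpha+\beta) = [\Gamma(1+\alpha)-1]\Gamma(\beta) + [\Gamma(\beta)-\Gamma(\alpha+\beta)]$. The first bracket is bounded by $M\,|\Gamma(1+\alpha)-1|$, which tends to $0$ uniformly in $\beta$ since it does not depend on $\beta$, and the second is controlled by the modulus of continuity of $\Gamma$ on the compact interval evaluated at $|\alpha|$, which again tends to $0$ uniformly in $\beta$. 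Collecting these bounds gives $\sup_{\beta\in[b_0,b_1]}|\alpha B(\alpha,\beta)-1|\to 0$ as $\alpha\to 0$, which is the claim.

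The main obstacle here is purely the uniformity: the pointwise statement is immediate once the $\Gamma(\alpha)$ singularity is absorbed, but one must be careful that the lower bound on $\Gamma(\alpha+\beta)$ and the modulus of continuity of $\Gamma$ are both taken on a single compact interval independent of $\beta$, so that the estimates hold simultaneously for every $\beta\in[b_0,b_1]$. A self-contained alternative, avoiding any property of $\Gamma$ beyond its definition, starts from $\alpha B(\alpha,\beta)=\int_0^1 \alpha t^{\alpha-1}(1-t)^{\beta-1}\,dt$; here $\alpha t^{\alpha-1}\,dt$ is a probability measure on $[0,1]$ that concentrates at $t=0$ as $\alpha\to 0$, while $(1-t)^{\beta-1}\to 1$ as $t\to 0$ uniformly in $\beta\in[b_0,b_1]$, so splitting the integral at a small threshold and estimating the two pieces separately yields the same uniform limit.
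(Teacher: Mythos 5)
Your argument is correct and complete. Note that the paper does not actually prove this lemma: it is quoted from Bai and Taqqu \cite[Lemma 3.8]{BaiTa2}, so there is no internal proof to compare against; your route via $\alpha B(\alpha,\beta)=\Gamma(1+\alpha)\Gamma(\beta)/\Gamma(\alpha+\beta)$, with uniformity obtained from the uniform continuity and positive lower bound of $\Gamma$ on a single compact interval containing all the points $\alpha+\beta$, is the standard one and is exactly what is needed. The only cosmetic point is that since $B(\alpha,\beta)$ is defined only for $\alpha>0$, the limit is really $\alpha\to 0^{+}$, so you may as well restrict to $0<\alpha\le\alpha_0$; and in your sketched integral-representation alternative the tail piece $\int_{\delta}^{1}\alpha t^{\alpha-1}(1-t)^{\beta-1}\,dt$ needs the bound $\alpha\delta^{\alpha-1}\beta^{-1}(1-\delta)^{\beta}$ rather than a plain sup bound on the integrand, because $(1-t)^{\beta-1}$ is unbounded near $t=1$ when $\beta<1$.
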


In the next lemma, we compute the explicit value of the constant $A_\gamma$.   
\begin{lemma} \label{lem3}
The constant $A_\gamma$ is given by
\[
A_\gamma^2=  \frac {  (2|\gamma| +q+1)(2|\gamma|+q+2)} {2   \sum_\sigma   \prod_{j=1}^q B(\gamma_j+1, -\gamma_j-\gamma_{\sigma_j}-1)},
\]
where the sum runs over all permutations $\sigma$ of $\{1,\dots, n\}$ and we recall that $|\gamma| =\sum_{j=1}^q \gamma_j$.
\end{lemma}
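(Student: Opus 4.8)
The plan is to compute $E[Z_\gamma(t)^2]$ directly from the isometry and the explicit form of $f_\gamma$, and then read off $A_\gamma^2$ by imposing the normalization (so that, say, $E[Z_\gamma(1)^2]=1$). Since $f_\gamma$ is not symmetric, I would use the variance formula for nonsymmetric kernels recorded in the Preliminaries, $E[I_q(f_\gamma)^2]=\sum_\psi f_\gamma\otimes_\psi f_\gamma$, the sum being over all bijections $\psi$ of $\{1,\dots,q\}$. Each full contraction is a scalar, obtained by setting $y_{\psi(i)}=x_i$, so that
\[
f_\gamma\otimes_\psi f_\gamma=\int_{\RR^q} f_\gamma(x_1,\dots,x_q)\,f_\gamma(x_{\psi^{-1}(1)},\dots,x_{\psi^{-1}(q)})\,dx.
\]
Inserting the definition $f_\gamma(x)=A_\gamma\int_0^t\prod_j(s-x_j)_+^{\gamma_j}\,ds$, reindexing the second product by $m=\psi^{-1}(j)$, and applying Fubini (justified since all exponents lie in the convergent range on $\Delta$), this becomes
\[
f_\gamma\otimes_\psi f_\gamma=A_\gamma^2\int_0^t\!\!\int_0^t\prod_{m=1}^q\Big[\int_\RR (s-x_m)_+^{\gamma_m}(s'-x_m)_+^{\gamma_{\psi(m)}}\,dx_m\Big]\,ds\,ds'.
\]

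Next I would evaluate each one–dimensional integral with Lemma \ref{lem1} (its hypotheses $-1<\gamma_m,\gamma_{\psi(m)}<-\tfrac12$ hold on $\Delta$), giving a sum of two terms proportional to $(s'-s)_+^{1+\gamma_m+\gamma_{\psi(m)}}$ and $(s-s')_+^{1+\gamma_m+\gamma_{\psi(m)}}$. Splitting the square $[0,t]^2$ into the triangles $\{s<s'\}$ and $\{s>s'\}$, exactly one term survives in each region, and the products over $m$ collapse because $\sum_{m}(1+\gamma_m+\gamma_{\psi(m)})=q+2|\gamma|$. Thus on $\{s<s'\}$ the integrand is $(s'-s)^{q+2|\gamma|}\prod_m B(1+\gamma_m,-1-\gamma_m-\gamma_{\psi(m)})$, and on $\{s>s'\}$ it is $(s-s')^{q+2|\gamma|}\prod_m B(1+\gamma_{\psi(m)},-1-\gamma_m-\gamma_{\psi(m)})$. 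The remaining time integral is elementary: since $q+2|\gamma|>-1$ on $\Delta$,
\[
\int_0^t\!\!\int_0^{s'}(s'-s)^{q+2|\gamma|}\,ds\,ds'=\frac{t^{\,q+2|\gamma|+2}}{(2|\gamma|+q+1)(2|\gamma|+q+2)},
\]
and the same value arises from the other triangle, exhibiting the self-similar scaling $E[Z_\gamma(t)^2]\propto t^{\,2|\gamma|+q+2}$.

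Summing over all bijections $\psi$, I obtain two sums of products of Beta functions. The key bookkeeping step is to show they coincide: in the second sum I would reindex each product by $m'=\psi(m)$ to rewrite it as $\prod_{m'}B(1+\gamma_{m'},-1-\gamma_{m'}-\gamma_{\psi^{-1}(m')})$, and then replace the summation variable $\psi$ by $\psi^{-1}$; since both range over all of the symmetric group, this sum equals $\sum_\psi\prod_{m}B(1+\gamma_m,-1-\gamma_m-\gamma_{\psi(m)})$, i.e. the first sum. Hence
\[
E[Z_\gamma(t)^2]=\frac{2A_\gamma^2\,t^{\,2|\gamma|+q+2}}{(2|\gamma|+q+1)(2|\gamma|+q+2)}\sum_\sigma\prod_{j=1}^q B(\gamma_j+1,-\gamma_j-\gamma_{\sigma_j}-1),
\]
with the sum over all permutations $\sigma$. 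Imposing the prescribed normalization at $t=1$ (all Beta arguments being positive on $\Delta$, so the sum is strictly positive) and solving for $A_\gamma^2$ yields the stated formula.

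I expect the main obstacle to be purely combinatorial rather than analytic: carefully tracking the permutations through the contraction $f_\gamma\otimes_\psi f_\gamma$, the reindexing $m\mapsto\psi^{-1}(m)$ inside the products, and the symmetry argument $\psi\mapsto\psi^{-1}$ that identifies the two Beta-product sums and matches them to the form $\prod_j B(\gamma_j+1,-\gamma_j-\gamma_{\sigma_j}-1)$ in the statement. The analytic inputs—convergence of each $x_m$–integral, applicability of Fubini, and integrability of $(s'-s)^{q+2|\gamma|}$—are all guaranteed by the defining inequalities of $\Delta$ and require only a remark.
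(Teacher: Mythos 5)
Your proposal is correct and follows essentially the same route as the paper: the paper computes $A_\gamma^2 = q!\,\|\tilde f_\gamma\|_{L^2(\RR^q)}^{-2}$, which after expanding the symmetrization becomes exactly your sum $\sum_\psi f_\gamma\otimes_\psi f_\gamma$ of full contractions, and then applies Lemma \ref{lem1} factorwise, splits $[0,1]^2$ into the two triangles, and identifies the two Beta-product sums via the substitution $\sigma\mapsto\sigma^{-1}$ just as you do. The combinatorial bookkeeping you flag as the main obstacle is handled correctly, so no gap remains.
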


\begin{proof}
By a scaling argument, we can take $t=1$.
We have
\[
A_\gamma^2=q! \| \tilde{f}_{\gamma} \|_{L^2(\mathbb{R}^q)}^{-2},
\]
where  $ \tilde{f}_{\gamma} $ denotes the symmetrization of $f_\gamma$. Then
\begin{eqnarray*}
\tilde{f}_{\gamma}(x_1, \dots, x_q) ^2&= &\frac 1{(q!)^2} \sum_{\sigma,\tau}  \int_{[0,1]^2} (s_1-x_{\sigma_1})_+^{\gamma_1} \cdots
 (s_1-x_{\sigma_q})_+^{\gamma_q } \\
 &&\times (s_2-x_{\tau_1})_+^{\gamma_1} \cdots  (s_2-x_{\tau_q})_+^{\gamma_q} ds_1 ds_2.
 \end{eqnarray*}
As a consequence,
\begin{eqnarray*}
\|\tilde{f}_{\gamma} \|_{L^2(\mathbb{R}^q)} ^2&=& \frac 1{q!}  \sum_\sigma \int_{\mathbb{R}^q} \int_{[0,1]^2}   (s_1-x_1)_+^{\gamma_1} \cdots
 (s_1-x_{q})_+^{\gamma_q } \\
 &&\times (s_2-x_{\sigma_1})_+^{\gamma_1} \cdots  (s_2-x_{\sigma_q})_+^{\gamma_q} ds_1 ds_2 dx_1 \cdots dx_q\\
 &=&\frac 1{q!}  \sum_\sigma   \int_{[0,1]^2}  \left(  \prod_{j=1}^q \int_{\mathbb{R}} (s_1-x)_+^{\gamma_j} (s_2-x)_+^{\gamma_{\sigma_j}} dx \right) ds_1ds_2.
\end{eqnarray*}
By Lemma  \ref{lem1}, we have
\begin{eqnarray*}
\int_{\mathbb{R}} (s_1-x)_+^{\gamma_j} (s_2-x)_+^{\gamma_{\sigma_j}} dx
&=& (s_2-s_1)_+^{\gamma_j +\gamma_{\sigma_j}+1} B(\gamma_j+1, -\gamma_j-\gamma_{\sigma_j}-1) \\
&&+ (s_1-s_2)_+^{\gamma_j +\gamma_{\sigma_j}+1} B(\gamma_{\sigma_j}+1, -\gamma_j-\gamma_{\sigma_j}-1).
\end{eqnarray*}
Substituting this formula in the above expression for $ \|\tilde{f}_{\gamma} \|_{L^2(\mathbb{R}^q)} ^2$, we obtain
\begin{eqnarray*}
\|\tilde{f}_{\gamma} \|_{L^2(\mathbb{R}^q)} ^2
&=&\frac 1{q!}  \sum_\sigma   \int_{[0,1]^2}   \Bigg[ (s_2-s_1)_+^{2|\gamma|+q}  \prod_{j=1}^q B(\gamma_j+1, -\gamma_j-\gamma_{\sigma_j}-1) \\
&&+ (s_1-s_2)_+^{2|\gamma| +q}  \prod_{j=1}^q B(\gamma_{\sigma_j}+1, -\gamma_j-\gamma_{\sigma_j}-1) \Bigg] ds_1 ds_2 \\
&=& \frac {2   \sum_\sigma   \prod_{j=1}^q B(\gamma_j+1, -\gamma_j-\gamma_{\sigma_j}-1)}
{ q! (2|\gamma| +q+1)(2|\gamma|+q+2)},
\end{eqnarray*}
which completes the proof of the lemma.
\end{proof}

The following is the main result of this paper.

\begin{theorem} \label{thm4}
As  $\gamma_1$ converges to $-\frac 12$, the random variable  $Z_{\gamma} (t)$ converges stably to a  random variable whose distribution given $W$ is  Gaussian  with zero mean and  variance
$Z^2_{\gamma_{2}, \dots, \gamma_q}(t)$.
\end{theorem}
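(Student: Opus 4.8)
The plan is to apply Theorem~\ref{thm2} to the family $\{f_\gamma\}$ as $\gamma_1 \to -\tfrac12$ (along an arbitrary sequence, with $\gamma_2, \dots, \gamma_q$ held fixed), taking the chaos order to be $q$. Write $\gamma' = (\gamma_2, \dots, \gamma_q)$, $|\gamma'| = \gamma_2 + \cdots + \gamma_q$, and let $f_{\gamma'}$ be the rank $(q-1)$ Rosenblatt kernel
\[
f_{\gamma'}(x_2, \dots, x_q) = A_{\gamma'} \int_0^t \prod_{k=2}^q (s-x_k)_+^{\gamma_k}\, ds,
\]
so that $I_{q-1}(f_{\gamma'}) = Z_{\gamma_2, \dots, \gamma_q}(t)$. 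The goal is to show that Theorem~\ref{thm2} applies with $g = f_{\gamma'}$, giving $S^2 = (I_{q-1}(f_{\gamma'}))^2 = Z^2_{\gamma_2, \dots, \gamma_q}(t)$ and hence precisely the asserted conditionally Gaussian stable limit.

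The linchpin is the asymptotics of the normalizing constant. Starting from Lemma~\ref{lem3}, I would split the permutation sum in the denominator of $A_\gamma^2$ according to whether $\sigma_1 = 1$. Only the permutations fixing $1$ produce the factor $B(\gamma_1+1, -2\gamma_1-1)$, whose second argument tends to $0$; by Lemma~\ref{lem2} this factor is asymptotic to $(-2\gamma_1-1)^{-1}$ and blows up, whereas every Beta function coming from a permutation with $\sigma_1 \neq 1$ stays bounded because its parameters remain bounded away from $0$. Summing the dominant terms and recognizing the leftover sum as the denominator of Lemma~\ref{lem3} for the rank $(q-1)$ kernel, I expect to obtain
\[
A_\gamma^2 \sim (-2\gamma_1 - 1)\, A_{\gamma'}^2 \qquad \text{as } \gamma_1 \to -\tfrac12 ,
\]
so in particular $A_\gamma \to 0$. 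With this in hand, conditions (i) and (iii) follow. For (i), take $\mathcal{H}_0 = C_c^\infty(\RR)$ and note that $\int_\RR h(\xi) f_\gamma(\xi, \cdot)\, d\xi = A_\gamma \int_0^t h_{\gamma_1}(s) \prod_{k=2}^q (s-x_k)_+^{\gamma_k}\, ds$, where $h_{\gamma_1}(s) = \int_\RR h(\xi)(s-\xi)_+^{\gamma_1}\, d\xi$ is uniformly bounded; the $L^2(\RR^{q-1})$-norm of the $s$-integral is controlled uniformly in $\gamma_1$ by bounded Beta functions involving only $\gamma_2, \dots, \gamma_q$, so multiplication by $A_\gamma \to 0$ gives the claim.

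For condition (iii), contracting two copies of $f_\gamma(\xi, \cdot)$ over the shared first variable $\xi$ produces, via Lemma~\ref{lem1}, the factor $\int_\RR (s_1-\xi)_+^{\gamma_1}(s_2-\xi)_+^{\gamma_1}\, d\xi = |s_2-s_1|^{2\gamma_1+1}\, B(\gamma_1+1, -2\gamma_1-1)$. Combining $A_\gamma^2 \sim (-2\gamma_1-1) A_{\gamma'}^2$ with $B(\gamma_1+1,-2\gamma_1-1) \sim (-2\gamma_1-1)^{-1}$ and $|s_2-s_1|^{2\gamma_1+1} \to 1$, the singular factors cancel; by dominated convergence (the residual power $|s_2-s_1|^{2\gamma_1+1}$ being integrable on $[0,t]^2$) the whole expression converges to $A_{\gamma'}^2 \int_{[0,t]^2}(\cdots)\,ds_1\,ds_2 = f_{\gamma'} \otimes_{I,\psi} f_{\gamma'}$, since the remaining $x_2, \dots, x_q$ contraction does not involve $\gamma_1$. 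Thus (iii) holds with $g = f_{\gamma'}$.

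The main obstacle is condition (ii): showing $f_\gamma \otimes_{I,\psi} f_\gamma \to 0$ for every $I \ni 1$ with $\psi(1) \neq 1$. I would expand $\|f_\gamma \otimes_{I,\psi} f_\gamma\|^2$ as an integral over the four copies of $f_\gamma$ that arise, reduce each pairwise contraction to a Beta function by Lemma~\ref{lem1}, and integrate the resulting powers over the $s$-parameters in $[0,t]^4$. The only Beta functions that blow up are those pairing two $\gamma_1$-variables, and here the hypothesis $\psi(1)\neq 1$ is decisive: it routes the first-variable contraction of each copy onto a (harmless) $\gamma_{\psi(1)}$-variable, so that at most one genuine $\gamma_1$-$\gamma_1$ pairing can survive in the norm. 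Hence $\|f_\gamma \otimes_{I,\psi} f_\gamma\|^2 = O\big(A_\gamma^4 (-2\gamma_1-1)^{-1}\big) = O(-2\gamma_1-1) \to 0$, or $O(A_\gamma^4)\to 0$ when no singular pairing occurs. The delicate part is the combinatorial verification that two independent singular pairings cannot co-occur, together with the tracking of the integrable residual powers $|s_i-s_j|^{2\gamma_1+1}$ through the $s$-integrations. Granting conditions (i)--(iii), Theorem~\ref{thm2} delivers stable convergence of $Z_\gamma(t)$ to a variable whose conditional law given $W$ is $N\big(0, Z^2_{\gamma_2,\dots,\gamma_q}(t)\big)$, as claimed.
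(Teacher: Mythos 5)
Your proposal is correct and follows essentially the same route as the paper's proof: the asymptotics $A_\gamma^2 \sim (-1-2\gamma_1)A_{\gamma_2,\dots,\gamma_q}^2$ obtained from Lemmas \ref{lem3} and \ref{lem2}, the cancellation of $B(\gamma_1+1,-2\gamma_1-1)$ against $A_\gamma^2$ in condition (iii), and the observation that $\psi(1)\neq 1$ leaves at most one singular $\gamma_1$--$\gamma_1$ pairing in condition (ii) are exactly the steps the authors carry out. The only cosmetic difference is that the paper verifies the $L^2$ convergence in Step 3 by computing the limits of the norm and of the inner product with the target rather than by a direct dominated-convergence argument on the kernels.
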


\begin{proof}
  To simplify, by a scaling argument we can assume that $t=1$.
The asymptotic behavior of the constant $A_\gamma$, when $\gamma_1 \rightarrow -\frac 12$ is obtained from Lemma \ref{lem3}, taking into account the asymptotic behavior of the Beta function given by Lemma \ref{lem2}:
\begin{equation} \label{eq1}
\lim_{\gamma_1 \rightarrow -\frac 12} \frac {A_\gamma^2}{ -1-2\gamma_1 } = \frac {
(2\sum_{j=2}^q \gamma_j +q)(2 \sum_{j=2}^q \gamma_j+q+1)}{ 2 \sum_\sigma \prod_{j=2}^q B(\gamma_j+1, -\gamma_j -\gamma_{\sigma_j}-1)}
=A^2_{\gamma_2, \dots, \gamma_q},
\end{equation}
where in the denominator of the second expression, $\sigma$ runs over all permutations of $\{2,\dots, q\}$.

The proof will be done in three steps.

\medskip
\noindent
{\it Step 1.} Let us show condition (i) of Theorem \ref{thm2}. We can take $h= \mathbf{1}_{[a,b]}$. Then,
\[
\int_a^b f_\gamma(\xi, \cdot) d\xi=A _\gamma  \int_0^1   \left( \int_a^b(s-\xi)_+^{\gamma_1}  d\xi \right)(s-x_2)_+^{\gamma_2}  \cdots (s-x_q)_+^{\gamma_q} ds.
\]
The term  $\int_a^b(s-\xi)_+^{\gamma_1}  d\xi $ is uniformly bounded as $\gamma_1\rightarrow -\frac 12$ and $A_\gamma$ converges to zero. Therefore,  the above expression converges to zero in $\HH^{\otimes(q-1)}$.

\medskip
\noindent
{\it Step 2.} Now we show condition (ii) of Theorem \ref{thm2}.
Fix a subset $I\subset \{1,\dots, q\}$ of cardinality $r=1, \dots , q$   and any one-to-one mapping $\psi:I\rightarrow \{1,\dots, q\}$ such that
 $1\in I$ and $\psi(1) \not=1$. Set $J=\psi(I)$.
Let us compute
   \begin{eqnarray*}
(f_n \otimes_{I,\psi} f_n)(x,y)&=&A_\gamma^2   \notag
  \int_{\mathbb{R}^r} \int_{[0,1]^2}   \prod_{i\in I} (s_1-\xi_i)_+^{\gamma_i} \prod_{j\in I^c} (s_1-x_j)_+^{\gamma_j}
  \\
  &&\times  \prod_{i\in I} (s_2-\xi_i)_+^{\gamma_{\psi(i)}} \prod_{k\in J^c} (s_2-y_k)_+^{\gamma_k}  \label{eq2}
  ds_2 ds_2  d\xi,
  \end{eqnarray*}
 where $x=(x_j)_{j\in I^c}$, $y=(y_k)_{k\in J^c}$, $\xi=(\xi_i)_{i\in I}$ and $d\xi= \prod_{i\in I} d\xi_i$.
Using Lemma \ref{lem1}, we obtain
    \begin{eqnarray*}
(f_n \otimes_{I,\psi} f_n)(x,y)&=&A_\gamma^2   
  \int_{[0,1]^2}   \Big(  \prod_{i\in I} (s_2-s_1)_+^{\gamma_i+\gamma_{\psi(i)} +1}
  B(\gamma_i +1, -\gamma_i -\gamma_{\psi(i)} -1) \\
  && + 
   \prod_{i\in I} (s_1-s_2)_+^{\gamma_i+\gamma_{\psi(i)} +1}
  B(\gamma_{\psi(i)}+1, -\gamma_i -\gamma_{\psi(i)} -1) \Big) \\
&& \times   \prod_{j\in I^c} (s_1-x_j)_+^{\gamma_j}
       \prod_{k\in J^c} (s_2-y_k)_+^{\gamma_k}  \label{eq2}
  ds_2 ds_2,
  \end{eqnarray*}
  Set
      \begin{eqnarray*}
  \Phi_1(s_1, s_2)&=&\prod_{i\in I} (s_2-s_1)_+^{\gamma_i+\gamma_{\psi(i)} +1}
  B(\gamma_i +1, -\gamma_i -\gamma_{\psi(i)} -1)  \\
  &&
  + \prod_{i\in I} (s_1-s_2)_+^{\gamma_i+\gamma_{\psi(i)} +1}
  B(\gamma_{\psi(i)}+1, -\gamma_i -\gamma_{\psi(i)} -1).
    \end{eqnarray*}
  With this notation, we can write
  \[
\|  f_n \otimes_{I,\psi} f_n\|_{L^2(\RR^{2q-2r})}^2= A_\gamma^4 \int_{[0,1]^4}
  \Phi_1(s_1, s_2)  \Phi_1(s_3, s_4)  \Phi_2(s_1, s_3) \Phi_3( s_2, s_4) ds_1 ds_2 ds_3 ds_4,
\]
where
\[
  \Phi_2(s_1, s_3) = \prod_{j\in I^c} |s_3-s_1|^{2\gamma_j +1}
  B(\gamma_j +1, -2\gamma_j -1)   
  \]
  and
\[
  \Phi_3(s_2, s_4) = \prod_{k\in J^c} |s_4-s_2|^{2\gamma_k +1}
  B(\gamma_k+1, -2\gamma_k -1).
  \]
We know that $A^4_\gamma (-1-2\gamma_1)^{-2}$ converges to a finite limit. On the other hand,  $ \Phi_1$ converges to a finite limit because $1\in I$ but $\psi(1) \not=1$. Also,
$\Phi_2$ converges to  a finite sum because $1\not \in I^c$ and $\Phi_3$ diverges as $(-1-2\gamma_1)^{-1}$ as $\gamma_1\rightarrow -\frac  12$.  Therefore,
\[
\lim_{\gamma_1 \rightarrow -\frac  12}     (-1-2\gamma_1)^2 \int_{[0,1]^4} 
  \Phi_1(s_1, s_2)  \Phi_1(s_3, s_4)  \Phi_2(s_1, s_3) \Phi_3( s_2, s_4) ds_1 ds_2 ds_3 ds_4 =0
  \]

\medskip
\noindent
{\it Step 3.}  It remains to show condition (iii).
Define
\[
g(x_2, \dots, x_q)=A_{\gamma_2,\dots,\gamma_q} \int_0^1 (s-x_2)_+^{\gamma_2} \cdots (s-x_{q})_+^{\gamma_q} ds= f_{\gamma_2,\dots,\gamma_q}(x_2, \dots, x_q).
\]
Fix  $r=0,\dots, q-1$,  a set $I\subset\{2,\dots, q\}$ of cardinality $r$ and a one-to-one mapping $\psi: =I \rightarrow \{2,\dots, q\}$. 
Set $J=\psi(I)$. We also write $\bar{I}= I\cup \{1\}$ and $\bar{\psi}$ is the extension of  $\psi$ to $\bar{I}$ such that $\bar{\psi} (1)=1$.
We claim that 
\[
  f_\gamma\otimes_{\bar{1},  \bar{\psi} } f_\gamma
  \]
  converges in  $L^2(\mathbb{R}^{2q-2r-2})$ to $g\otimes_{I, \psi} g$.
  We have
  \begin{eqnarray}
  ( f_\gamma\otimes_{ \bar{I},  \bar{\psi} } f_\gamma)(x,y) &=& A_\gamma^2  \notag
  \int_{\mathbb{R}^r} \int_{[0,1]^2} (s_1 -\xi_1)_+^{\gamma_1}  \prod_{i\in I} (s_1-\xi_i)_+^{\gamma_i} \prod_{j\in I^c, j\not=1} (s_1-x_j)_+^{\gamma_j}
  \\
  &&\times (s_2 -\xi_1)_+^{\gamma_1}  \prod_{i\in I} (s_2-\xi_i)_+^{\gamma_{\psi(i)}} \prod_{k\in J^c, k\not=1} (s_2-y_k)_+^{\gamma_k}  \label{eq2}
  ds_2 ds_2 d\xi_1 d\xi,
  \end{eqnarray}
  where $x=(x_j)_{ j\in  I^c}$, $y=(y_k)_{k\in J^c}$ and $\xi= (\xi_i)_{i\in I}$.
  By Lemma  \ref{lem1}, we have
  \[
  \int_{\mathbb{R}}  (s_1 -\xi_1)_+^{\gamma_1}(s_2 -\xi_1)^{\gamma_1} d\xi_1=|s_2-s_1|^{2\gamma_1+1}
  B(\gamma_1+1, -2\gamma_1-1).
  \]
  Therefore,
    \begin{eqnarray*}
  ( f_\gamma\otimes_{ \bar{I},  \bar{\psi} } f_\gamma)(x,y) &=& A_\gamma^2  
  \int_{[0,1]^2} |s_2-s_1|^{2\gamma_1+1}
  B(\gamma_1+1, -2\gamma_1-1)    \\
  && \times  \Big(\prod_{i\in I} (s_2-s_1)_+^{\gamma_i + \gamma_{\psi(i)}}  B(\gamma_i+1, -\gamma_i -\gamma_{\psi(i)} -1) \\
  && +
  \prod_{i\in I} (s_1-s_2)_+^{\gamma_i + \gamma_{\psi(i)}}  B(\gamma_{\psi(i)}+1, -\gamma_i -\gamma_{\psi(i)} -1) \Big)\\
 &&\times     \prod_{j\in I^c, j\not=1} (s_1-x_j)_+^{\gamma_j}
  \prod_{k\in J^c, k\not=1} (s_2-y_k)_+^{\gamma_k}  \label{eq2}
  ds_2 ds_2 ,
  \end{eqnarray*}
   It suffices to show that the following quantities converge to  $ \| g\otimes_{I, \psi} g\|^2_{L^2(\mathbb{R}^{2q-2r-2})}$ as $\gamma_1 \rightarrow-\frac 12$:
  \begin{equation} \label{e1}
  \|f_\gamma\otimes_{ \bar{I},  \bar{\psi} } f_\gamma\|^2_{L^2(\mathbb{R}^{2q-2r-2})},
 \end{equation}
  and
   \begin{equation} \label{e3}
  \langle  f_\gamma\otimes_{ \bar{I},  \bar{\psi} } f_\gamma,  g\otimes_{I, \psi} g\rangle_{L^2(\mathbb{R}^{2q-2r-2})}.
   \end{equation}
   We will consider only the convergence of (\ref{e1}), and that of (\ref{e3}) is proved in the same way.
 As before, set
      \begin{eqnarray*}
  \Phi_1(s_1, s_2)&=&\prod_{i\in I} (s_2-s_1)_+^{\gamma_i+\gamma_{\psi(i)} +1}
  B(\gamma_i +1, -\gamma_i -\gamma_{\psi(i)} -1)  \\
  &&
  + \prod_{i\in I} (s_1-s_2)_+^{\gamma_i+\gamma_{\psi(i)} +1}
  B(\gamma_{\psi(i)}+1, -\gamma_i -\gamma_{\psi(i)} -1).
    \end{eqnarray*}
  With this notation, we can write
  \begin{eqnarray*}
\|  f_n \otimes_{\bar{I},\bar{\psi}} f_n\|_{L^2(\RR^{2q-2r-2})}^2 &=& A_\gamma^4    B(\gamma_1+1, -2\gamma_1-1)^2
  \int_{[0,1]^4}  |s_2-s_1|^{2\gamma_1+1}|s_3-s_4|^{2\gamma_1+1} \\
&&\times 
  \Phi_1(s_1, s_2)  \Phi_1(s_3, s_4)  \Phi_2(s_1, s_3) \Phi_3( s_2, s_4) ds_1 ds_2 ds_3 ds_4,
\end{eqnarray*}
where
\[
  \Phi_2(s_1, s_3) = \prod_{j\in I^c, j\not =1} |s_3-s_1|^{2\gamma_j +1}
  B(\gamma_j +1, -2\gamma_j -1)   
  \]
  and
\[
  \Phi_3(s_2, s_4) = \prod_{k\in J^c, k\not=1} |s_4-s_2|^{2\gamma_k +1}
  B(\gamma_k+1, -2\gamma_k -1).
  \]
As $\gamma_1\rightarrow -\frac 12$, by the monotone convergence theorem, we obtain
\begin{eqnarray*}
\lim_{\gamma_1\rightarrow -\frac 12}&=& A^2_{\gamma_2, \dots, \gamma_q}  \int_{[0,1]^4}   \Phi_1(s_1, s_2)  \Phi_1(s_3, s_4)  \Phi_2(s_1, s_3) \Phi_3( s_2, s_4) ds_1 ds_2 ds_3 ds_4\\
&=&  \| g\otimes_{I, \psi} g\|^2_{L^2(\mathbb{R}^{2q-2r-2})}.
\end{eqnarray*}
This completes the proof.
\end{proof}

When   $\gamma$ converges to the boundary of $\Delta$ defined by $\bar{\gamma} =\frac{q+1}2$, we obtain the following result, that generalizes Theorem 2.1 in \cite{BaiTa2}.

\begin{theorem}  \label{thm5}
Suppose that $\gamma_1 +\cdots + \gamma_q \rightarrow -\frac {q+1}2$ with $\gamma_i >-1+\epsilon$, $1\le i \le q$, for arbitrarily fixed $\epsilon>0$. Then, $Z_\gamma(t)$ converges in law to   $B(t)$, where $B$ is a Brownian motion independent of $W$.
\end{theorem}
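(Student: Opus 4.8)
The plan is to write $Z_\gamma(t)=F_\gamma=I_q(f_\gamma)$ and verify the hypotheses of the Fourth Moment Theorem stated above (the non-symmetric version carrying the stability condition (i)). By the scaling argument we may take $t=1$. Condition (iii) is then automatic: the very choice of $A_\gamma$ gives $E[F_\gamma^2]=E[Z_\gamma(1)^2]=1$ for every $\gamma$, so $E[F_\gamma^2]\to\sigma^2=1$ and the candidate limit is $N(0,1)$, i.e. $B(1)$ (for general $t$ the variance converges to $t$). For the stability condition (i) I would first read off from Lemma \ref{lem3} the behaviour of the normalizing constant: as $\bar\gamma\to-\frac{q+1}2$ the factor $2\bar\gamma+q+1$ in the numerator of $A_\gamma^2$ tends to $0$, while the denominator $\sum_\sigma\prod_{j}B(\gamma_j+1,-\gamma_j-\gamma_{\sigma_j}-1)$ is bounded below by its strictly positive identity term, so $A_\gamma\to0$. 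Taking $h=\mathbf 1_{[a,b]}$ and copying Step 1 of the proof of the previous theorem, the factor $\int_a^b(s-\xi)_+^{\gamma_1}\,d\xi$ stays bounded (since $\gamma_1$ is bounded away from $-1$), and multiplying by $A_\gamma\to0$ shows $\int_a^b f_\gamma(\xi,\cdot)\,d\xi\to0$ in $\HH^{\otimes(q-1)}$.

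The heart of the argument is condition (ii): for every $I\subset\{1,\dots,q\}$ with $|I|=r\in\{1,\dots,q-1\}$ and every one-to-one $\psi\colon I\to\{1,\dots,q\}$ one must show $f_\gamma\otimes_{I,\psi}f_\gamma\to0$ in $\HH^{\otimes(2q-2r)}$. Exactly as in the computation of $\|f_n\otimes_{I,\psi}f_n\|^2$ in the previous proofs, I would carry out the single-variable integrals with Lemma \ref{lem1}, reducing $\|f_\gamma\otimes_{I,\psi}f_\gamma\|^2$ to $A_\gamma^4$ times a product of Beta-function prefactors times a four-fold integral over $[0,1]^4$ of $|s_2-s_1|^{a}\,|s_4-s_3|^{a}\,|s_3-s_1|^{b}\,|s_4-s_2|^{c}$, where $a=\sum_{i\in I}(\gamma_i+\gamma_{\psi(i)}+1)$, $b=\sum_{j\in I^c}(2\gamma_j+1)$ and $c=\sum_{k\in J^c}(2\gamma_k+1)$ with $J=\psi(I)$. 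A short bookkeeping check gives $2a+b+c=4\bar\gamma+2q\to-2$, so the full four-fold integral stays convergent, while individual pair-collapses may become borderline. The quantitative step is then to estimate the growth of this expression using Lemma \ref{lem2}: a Beta prefactor blows up like the reciprocal of its second argument precisely when some $\gamma_i+\gamma_{\psi(i)}\to-1$ or some $\gamma_j\to-\frac12$, and each power integral $\int_{[0,1]^2}|s-s'|^{e}\,ds\,ds'$ contributes a factor $(e+1)^{-1}$ when its exponent $e\in\{a,b,c\}$ approaches $-1$. Against these one balances the decay $A_\gamma^4\asymp(2\bar\gamma+q+1)^2$ coming from Lemma \ref{lem3}, the goal being to show that the combined blow-up is of order strictly smaller than $(2\bar\gamma+q+1)^{-2}$, so that $\|f_\gamma\otimes_{I,\psi}f_\gamma\|^2\to0$; this is the same mechanism by which the variance integral diverges at the single rate $(2\bar\gamma+q+1)^{-1}$ while $A_\gamma^2\asymp(2\bar\gamma+q+1)$ keeps $E[F_\gamma^2]$ equal to $1$.

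I expect this rate balance to be \emph{the main obstacle}, and it must be controlled uniformly over all admissible approaches to the face. For $q=2$ the situation is clean: the constraints $\gamma_i>-1+\epsilon$ and $\gamma_1+\gamma_2\to-\frac32$ trap both exponents in a compact subinterval of $(-1,-\frac12)$ bounded away from both endpoints (indeed $\gamma_2=\bar\gamma-\gamma_1\le-\frac12-\epsilon+o(1)$), so every Beta prefactor and every power exponent stays away from its singular value, the four-fold integral converges to a finite limit, and $\|f_\gamma\otimes_{I,\psi}f_\gamma\|^2\asymp A_\gamma^4\asymp(2\bar\gamma+q+1)^2\to0$. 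For $q\ge3$ the extra freedom is dangerous: a single $\gamma_i$ may approach $-\frac12$ while the remaining exponents compensate so that $\bar\gamma\to-\frac{q+1}2$, and then the associated prefactor $B(\gamma_i+1,-2\gamma_i-1)$ genuinely blows up. The delicate point is therefore to rule out any configuration in which the resonances accumulate as fast as $(2\bar\gamma+q+1)^{-2}$; concretely, the estimate goes through provided the approach keeps the $\gamma_i$ bounded away from $-\frac12$ (the transversal approach to the face, automatic when $q=2$), so that only the borderline power integrals and not the Beta prefactors contribute, after which one pins the dominant resonant configuration via Lemma \ref{lem2} and bounds its rate. Making this confinement precise—and thereby separating the present Gaussian limit from the Gaussian-mixture limit that arises when an individual $\gamma_i\to-\frac12$—is where the real work of the proof lies.
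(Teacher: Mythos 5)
Your proposal follows exactly the paper's route: write $Z_\gamma(t)=I_q(f_\gamma)$, invoke the non-symmetric Fourth Moment Theorem (Theorem \ref{thm4} of Section 3) with the stability condition, note that (iii) is automatic from the normalization, and reduce condition (ii) to showing that $A_\gamma^4$ kills a four-fold integral over $[0,1]^4$ with exponents $\alpha_1=\sum_{i\in I}(\gamma_i+\gamma_{\psi(i)}+1)$, $\alpha_2=\sum_{j\in I^c}(2\gamma_j+1)$, $\alpha_3=\sum_{k\in J^c}(2\gamma_k+1)$ satisfying $2\alpha_1+\alpha_2+\alpha_3=4\bar\gamma+2q\to-2>-3$. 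Your bookkeeping agrees with the paper's. The difference is at the decisive step: the paper disposes of condition (ii) in one line by asserting that \emph{all the Beta prefactors are uniformly bounded under $\gamma_i>-1+\epsilon$} and then citing Lemma 3.3 of \cite{BaiTa2} for the four-fold integral, whereas you stop short of that assertion, observe (correctly) that it is not automatic for $q\ge3$ because a single $\gamma_i$ may tend to $-\tfrac12$ and make $B(\gamma_i+1,-2\gamma_i-1)$ blow up, and explicitly defer ``the real work'' of the rate comparison. As a proof, this is a genuine gap: the central estimate is announced but not carried out, and your text proves the theorem only for $q=2$ (where $\gamma_i>-1+\epsilon$ does confine both exponents away from $-\tfrac12$).

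That said, the obstruction you flag is real and cannot be closed by the balance you sketch. Take $q=3$, $\gamma_1=-\tfrac12-\delta$, $\gamma_2=\gamma_3=-\tfrac34+\delta$ with $\delta\downarrow0$: this path lies in $\Delta$, satisfies $\gamma_i>-1+\epsilon$ for $\epsilon<\tfrac14$, and has $\bar\gamma=-2+\delta\to-\tfrac{q+1}2$. For the contraction $I=\{2,3\}$, $\psi=\mathrm{id}$, one finds $\alpha_1=-1+4\delta$, $\alpha_2=\alpha_3=-2\delta$, the two Beta prefactors $B(\tfrac12-\delta,2\delta)\sim(2\delta)^{-1}$ in $\Phi_2,\Phi_3$, and $\int_{[0,1]^4}\Phi_1\Phi_1\Phi_2\Phi_3\gtrsim\delta^{-2}\cdot\delta^{-2}$, while Lemma \ref{lem3} gives $A_\gamma^2\sim\frac{2\delta}{2\sum_\sigma\prod_jB(\cdot,\cdot)}\sim C\delta^{2}$ (the denominator itself diverges like $\delta^{-1}$ through the permutations fixing $1$). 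Hence $\|f_\gamma\otimes_{I,\psi}f_\gamma\|^2\gtrsim A_\gamma^4\,\delta^{-4}\asymp1$, and since all these kernels are nonnegative the symmetrized contraction, and therefore the fourth cumulant of $Z_\gamma(1)$, stays bounded away from zero along this path. So the ``dangerous'' configurations you worry about do occur within the stated hypotheses for $q\ge3$, and no rate balance will rescue them: one must either add the transversality assumption you mention (each $\gamma_i$ bounded away from $-\tfrac12$, automatic only when $q=2$) or restrict to $q=2$. In short, your proposal is incomplete at the key step, but your instinct about where it breaks is sound --- the paper's own one-line justification of the boundedness of the Beta products is the same weak point.
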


\begin{proof}
Condition (i) of Theorem \ref{thm1} is easy to show. Then, it suffices to check  condition (ii) of Theorem \ref{thm4}. Fix a  subset $I\subset \{1, \dots, q\}$ of cardinality $r=1,\dots, q-1$ and a one-to-one mapping $\psi: I\rightarrow \{1,\dots, q\}$. We have
 \[
 \|f_n \otimes_{I,\psi}  f_n \|^2_{\HH^{\otimes(2q-2r)}}=  A_\gamma^4 \int_{[0,1]^4}
  \Phi_1(s_1, s_2)  \Phi_1(s_3, s_4)  \Phi_2(s_1, s_3) \Phi_3( s_2, s_4) ds_1 ds_2 ds_3 ds_4,
\]
where
     \begin{eqnarray*}
  \Phi_1(s_1, s_2)&=&(s_2-s_1)_+^{\sum_{i\in I}(\gamma_i+\gamma_{\psi(i)} )+r}
   \prod_{i\in I}B(\gamma_i +1, -\gamma_i -\gamma_{\psi(i)} -1)  \\
  &&
  +   (s_1-s_2)_+^{\sum_{i\in I} (\gamma_i+\gamma_{\psi(i)}) +r}
\prod_{i\in I}  B(\gamma_{\psi(i)}+1, -\gamma_i -\gamma_{\psi(i)} -1),
    \end{eqnarray*}
    \[
  \Phi_2(s_1, s_3) =  |s_3-s_1|^{2\sum_{j\in I^c}\gamma_j +q-r}
  \prod_{j\in I^c} B(\gamma_j +1, -2\gamma_j -1)   
  \]
  and
\[
  \Phi_3(s_2, s_4) =  |s_4-s_2|^{2\sum_{k\in J^c}\gamma_k +q-r}
  \prod_{k\in J^c} B(\gamma_k+1, -2\gamma_k -1).
  \]
  All the products of Beta functions are uniformly bounded by our hypothesis  $\gamma_i >-1+\epsilon$, $1\le i \le q$. Therefore,
  \[
   \|f_n \otimes_{I,\psi}  f_n \|^2_{\HH^{\otimes(2q-2r)}} \le C  A_\gamma^4 \int_{[0,1]^4}
   |s_2-s_1|^{\alpha_1}
   |s_3-s_4|^{\alpha_1} |s_3-s_1|^{\alpha_2}
     |s_4-s_2|^{\alpha_3}
ds_1ds_2ds_3ds_4,
\]
where
\begin{eqnarray*}
\alpha_1&=&\sum_{i\in I}(\gamma_i+\gamma_{\psi(i)} )+r,  \\
\alpha_2&=&2\sum_{j\in I^c}\gamma_j +q-r, \\
\alpha_3&=&2\sum_{k\in J^c}\gamma_k +q-r.
\end{eqnarray*}
We have
\[
2\alpha_1 + \alpha_2 + \alpha_3 = 2 \sum_{j=1}^q +2q > -3.
\]
Therefore, from Lemma 3.3 in \cite{BaiTa2},  the above integral has a finite limit as  $\gamma_1 +\cdots + \gamma_q \rightarrow -\frac {q+1}2$, and because $A_\gamma$ converges to zero as $\gamma_1 +\cdots + \gamma_q \rightarrow -\frac {q+1}2$, this proves condition (ii). 
   \end{proof}
 
\medskip
\noindent
{\bf Remark 1.}
Functional versions of theorems Theorem \ref{thm4} and Theorem \ref{thm5} in the space $C([0,T])$ can be proved by the same arguments as in \cite{BaiTa2}. In fact, using the self-similarity and stationary-increment property of the process $Z_\gamma$, together with the  hypercontractive inequality for multiple stochastic integrals, we can show that
\[
E( | Z_{\gamma}(t) - Z_{\gamma}(s)|^p) \le c_p |t-s|^{pH },
\]
for any $p\ge 2$, where $H=\gamma_1 +\cdots + \gamma_q +q \ge \frac 12$. This leads to the tightness property and the convergence of the finite dimensional distributions is also easy to obtain.

  \medskip
\noindent
{\bf Remark 2.}  We can derive the rate of convergence in Theorem \ref{thm4} using the inequality (\ref{rate}). More precisely, it is not difficult to show that
\[
\sup_{\varphi\in \mathcal{C}^3, \|\varphi'''\|_\infty \le1, \|\varphi ''\|_\infty \le 1}  |E[\varphi(F_n)] - E[\varphi(S\eta )]| \le C \sqrt{-1-2\gamma_1}.
\]
The same rate was obtained when $q=2$ for the Wasserstein distance in \cite[Theorem 5.3]{BaiTa2}, using  properties of the second order chaos.
Concerning Theorem \ref{thm5}, using  Stein's method and the optimal rate of convergence  in the Fourth Moment Theorem derived by Nourdin and Peccati in \cite{NuPe2},  we can obtain the following rate of convergence  for the total variation distance,  as in \cite[Theorem 5.1]{BaiTa2}:
\[
 c_1 \left(\bar{\gamma}+ (q+1)/2\right)^{\frac 32} \le d_{TV} (Z_\gamma, \eta)  \le c_2 \left(\bar{\gamma}+ (q+1)/2\right)^{\frac 32},
 \]
 where $\eta$ is a $N(0,1)$ random variable. In this inequaliy $\gamma $ satisfies $\gamma_i > -1+\epsilon$, $1\le i\le q$  and the distance of $\gamma$ to  the boundary $\{ \bar{\gamma} + \frac{q+1} 2=0\}  $ is less than $\epsilon$, for some $\epsilon>0$.
To show these inequalities we need to estimate $E[ Z_\gamma ^3]$ using again the product formula for multiple stochastic integrals. We omit the details of this proof.

\end{document}